\documentclass{article}

\usepackage{arxiv}

\usepackage[utf8]{inputenc} 
\usepackage[T1]{fontenc}    
\usepackage{hyperref}       
\usepackage{url}            
\usepackage{booktabs}       
\usepackage{amsfonts}       
\usepackage{nicefrac}       
\usepackage{microtype}      
\usepackage{lipsum}
\usepackage{graphicx}
\graphicspath{ {./images/} }

\usepackage{amssymb}
\usepackage{amsmath}
\usepackage{amsthm}
\usepackage{xcolor}
\usepackage{subfig}
\usepackage{tikz}
\usetikzlibrary{arrows}

\newtheorem{theorem}{Theorem}
\newtheorem{lemma}{Lemma}
\newtheorem{cor}{Corollary}

\newtheorem{remark}{Remark}

\newtheorem*{theorem*}{Theorem}

\title{Billiard maps of confocal ellipses commute: a~geometric proof}

\author{
 Timur Bakiev \\
  Faculty of Mathematics\\
  HSE University\\
  Moscow, 109028, Russia \\
  \texttt{tnbakiev@edu.hse.ru} \\
   \And
 Ivan Molodyk \\
  Department of Mathematics\\
  Penn State University\\
  State College, PA 16801, USA \\
  \texttt{iam5268@psu.edu} \\
}

\usepackage[backend=biber,style=numeric-comp,sorting=none]{biblatex} 
\begin{document}

\definecolor{rvwvcq}{rgb}{0.08235294117647059,0.396078431372549,0.7529411764705882}
\definecolor{dtsfsf}{rgb}{0.8274509803921568,0.1843137254901961,0.1843137254901961}
\definecolor{sexdts}{rgb}{0.1803921568627451,0.49019607843137253,0.19607843137254902}
\definecolor{wrwrwr}{rgb}{0.3803921568627451,0.3803921568627451,0.3803921568627451}
\definecolor{dbwrru}{rgb}{0.8588235294117647,0.3803921568627451,0.0784313725490196}
\definecolor{ffcctt}{rgb}{1,0.8,0.2}
\definecolor{qqqqff}{rgb}{0,0,1}
\definecolor{ffxfqq}{rgb}{1,0.4980392156862745,0}
\definecolor{cczzff}{rgb}{0.8,0.6,1}
\definecolor{wwqqcc}{rgb}{0.4,0,0.8}
\definecolor{fuqqzz}{rgb}{0.9568627450980393,0,0.6}
\definecolor{ffttww}{rgb}{1,0.2,0.4}
\definecolor{ffdxqq}{rgb}{1,0.8431372549019608,0}
\definecolor{cxvqqq}{rgb}{0.7803921568627451,0.3137254901960784,0}

\maketitle

\begin{abstract}
We give a new, purely geometric proof of the classical fact that billiard maps in confocal ellipses commute. Existing proofs of this result rely on symplectic geometry and an invariant measure on the space of oriented lines; ours uses only elementary projective and Euclidean geometry. The argument rests on a main theorem describing how a billiard reflection can be constructed geometrically via tangents to confocal ellipses, from which the commutation property follows directly. This also yields, as a byproduct, an incidence result for tangents at four reflection points, revealing a hidden symmetry in the confocal billiard configuration. The main theorem recovers, via purely synthetic means, a fact previously established only by direct computation (Berman et al., 2024).
\end{abstract}

\keywords{Billiards \and Confocal ellipses \and Projective geometry}

\section{Introduction}
\label{sec:intro}

A billiard system describes the motion of a free point inside a plane domain: the point moves with a constant speed along a straight line until it hits the~boundary, where it reflects according to the familiar law of geometrical optics: ''the~angle of incidence equals the angle of reflection``. The billiard map acts on oriented lines that intersect the billiard table, sending the~incoming billiard trajectory to the outgoing one. See Serge Tabachnikov's book, Geometry and Billiards~\cite{serge_2005}, for a detailed discussion of these standard definitions and classical results.

We will always use Latin uppercase letters ($A$, $B$, $C$) for points and two points to denote both lines and oriented lines, but notice that in the latter case the~order of letters matters; Latin lowercase letters ($a$, $b$, $c$) also denote lines; Gothic lowercase letters ($\mathfrak{a}$, $\mathfrak{b}$, $\mathfrak{c}$) will be used for ellipses, and Latin italic uppercase letters ($\mathcal{A}$, $\mathcal{B}$, $\mathcal{C}$) -- for billiard maps.

\begin{theorem}
    Let $\mathfrak{a}$ and $\mathfrak{b}$ be confocal ellipses. Denote $\mathcal{A}$ and $\mathcal{B}$ the billiard maps in $			\mathfrak{a}$ and $\mathfrak{b}$, respectively, acting on the~space of oriented lines intersecting both ellipses. Then $\mathcal{A}\mathcal{B} = \mathcal{B}\mathcal{A}$, i.e. they commute.
\end{theorem}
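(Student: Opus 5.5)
The plan is to reduce the commutation to a statement about a single configuration of four lines and four reflection points, and then prove that statement with projective and Euclidean incidence facts about confocal conics.

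\textbf{Setting up the configuration.} Take an oriented line $\ell_0$ meeting both ellipses, say with $\mathfrak{a}$ inside $\mathfrak{b}$. Put $\ell_1=\mathcal{B}\ell_0$ and $\ell_2=\mathcal{A}\ell_1$, and on the other side $m_1=\mathcal{A}\ell_0$ and $m_2=\mathcal{B}m_1$. We must show $\ell_2=m_2$.

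By Chasles' theorem, all lines of a billiard trajectory in $\mathfrak{a}$, and likewise in $\mathfrak{b}$, are tangent to one common confocal conic $\mathfrak{c}$ (the caustic). So $\ell_0,\ell_1,\ell_2,m_1,m_2$ are all tangent to the same $\mathfrak{c}$. This holds because reflection in either ellipse preserves the caustic of the confocal family, via the Joachimsthal-type invariant. A conic has exactly two tangents through a given point, so a tangent line is determined by its point on $\mathfrak{a}$ or $\mathfrak{b}$ together with a choice between those two tangents. Hence it suffices to show that $\ell_2$ and $m_2$ pass through a common point, with compatible orientation.

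\textbf{Reducing to a concurrence statement.} Let $P=\ell_0\cap\mathfrak{b}$ be the relevant reflection point and $Q=\ell_0\cap\mathfrak{a}$. Then $\ell_1$ is the second tangent from $P$ to $\mathfrak{c}$, and $m_1$ is the second tangent from $Q$. Let $R=\ell_1\cap\mathfrak{a}$ and $S=m_1\cap\mathfrak{b}$. Then $\ell_2$ is the second tangent from $R$ and $m_2$ is the second tangent from $S$. The goal becomes: the second tangents from $R$ and $S$ coincide. Equivalently, the line $RS$ is tangent to $\mathfrak{c}$, so that $P,Q,R,S$ form a "quadrilateral" whose four sides are tangent to $\mathfrak{c}$, with vertices alternating between $\mathfrak{a}$ and $\mathfrak{b}$.

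I would prove this via the reflection construction. The tangent to $\mathfrak{a}$ at $Q$ bisects the angle between $\ell_0$ and $m_1$, and the tangent to $\mathfrak{b}$ at $P$ bisects the angle between $\ell_0$ and $\ell_1$. Using the focal (Graves/Ivory) properties of confocal conics, the claim becomes an incidence statement for the tangents to $\mathfrak{a}$ and $\mathfrak{b}$ at the four points $P,Q,R,S$. The expected form is that the tangents at $P$ and $R$ and those at $Q$ and $S$ meet on a common line or conic. This is exactly the kind of symmetric incidence the abstract alludes to.

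\textbf{Main obstacle.} The key step is proving that incidence synthetically. My first attempt would use Ivory's lemma: the diagonals of a curvilinear quadrilateral formed by two pairs of confocal conics are equal. I would combine it with the fact that, for a point $X$ on one of the conics, the two tangents from $X$ to $\mathfrak{c}$ are symmetric with respect to the tangent and normal of that conic at $X$. The idea is to build the reflection of $\ell_1$ in $\mathfrak{a}$ as a tangent line through a constructed point, and show that the same construction applied to $m_1$ and $\mathfrak{b}$ yields the same point. Failing that, I would fall back on a projective argument: by Desargues' involution theorem, the conics of the dual pencil spanned by $\mathfrak{c}$ and the foci cut involutions on lines, and that would give the needed symmetry.

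Finally, degenerate cases, such as lines through the foci or tangent configurations, follow by continuity.
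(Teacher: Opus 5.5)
\textbf{There is a genuine gap: the proposal never proves the closing statement, which is the whole content of the theorem.}

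Your reduction is sound as far as it goes. Reflection at a point in a confocal conic swaps the two tangents from that point to $\mathfrak{c}$. So commutation is equivalent to the alternating quadrilateral closing up: $RS$ must be the tangent to $\mathfrak{c}$ from $R$ other than $\ell_1$, and from $S$ other than $m_1$. Your intermediate phrasing ``$\ell_2$ and $m_2$ pass through a common point'' is not enough, since any two distinct tangents to $\mathfrak{c}$ meet, possibly at infinity; you need $S\in\ell_2$.

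The closing property itself is left unproved. The incidence you hope for is stated only vaguely (``meet on a common line or conic''), and the tools are only named:
\begin{itemize}
\item Desargues' involution theorem, applied to the confocal dual pencil at a point, gives the symmetry of the two tangents to $\mathfrak{c}$ about the tangent of the confocal conic through that point. That is exactly the caustic property you already used, and it says nothing by itself about how four such points close up.
\item Ivory's lemma is a metric statement about curvilinear quadrilaterals bounded by confocal conics. You give no mechanism by which it would yield an incidence of tangent lines.
\end{itemize}
As written, the theorem has been reduced to an equivalent, unproved statement.

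\textbf{The missing ingredient is the sharp form of that incidence, which is the paper's Main theorem.} Let $I$ be the intersection of the tangent to $\mathfrak{b}$ at $P$ with the tangent to $\mathfrak{a}$ at $Q$. Then $\ell_1=\mathcal{B}\ell_0$ passes through the second point of contact of the tangents from $I$ to $\mathfrak{a}$, so that point is your $R$. Swapping the roles of the ellipses, $S$ is the second point of contact of the tangents from $I$ to $\mathfrak{b}$. Thus the four tangents at $P,Q,R,S$ are all concurrent at $I$, and the configuration is symmetric. Applying the same statement to the line $RS$ then gives $\mathcal{A}\ell_1=RS=\mathcal{B}m_1$ directly, with no caustic needed.

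In your notation, the paper proves this as follows.
\begin{itemize}
\item Inscribe in $\mathfrak{a}$ the quadrilateral formed by $Q$, $R$ and the second intersections of $PQ$ and $PR$ with $\mathfrak{a}$. Brocard's theorem gives a triangle with vertex $P$ that is self-polar with respect to $\mathfrak{a}$.
\item Pascal's theorem on a degenerate hexagon puts one side of this triangle through $P$ on the tangent $IP$.
\item The confocal lemma says the pole, with respect to $\mathfrak{a}$, of the tangent to $\mathfrak{b}$ at $P$ lies on the normal at $P$. Hence the other side through $P$ is the normal.
\item A projective map fixes $P$ and the vertex on the normal, and sends the vertex on the tangent to infinity. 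It preserves reflection in the normal at $P$, and it turns the normal into an axis of symmetry of the image of $\mathfrak{a}$. So the images of $PQ$ and $PR$ are mirror images of each other.
\end{itemize}
You need some argument of this kind, or a direct computation as in Berman et al., to make the proposal a proof.
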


\begin{figure}[ht!]
	\centering
	
    \subfloat[The commutation theorem.]{\label{fig:teor1} \begin{tikzpicture}[line cap=round,line join=round,>=triangle 45,x=1cm,y=1cm,scale=0.3,every node/.style={scale=1.2},thick,rotate=90]
\clip(-15,-15) rectangle (25,10);
\draw [rotate around={3.7838926880849164:(-1.58,-2.02)},line width=2pt,color=sexdts] (-1.58,-2.02) ellipse (5.784225811185012cm and 4.371083187812707cm);
\draw [rotate around={3.7838926880849226:(-1.58,-2.02)},line width=2pt,color=dtsfsf] (-1.58,-2.02) ellipse (9.137991570470321cm and 8.315767549780759cm);
\draw [line width=1.6pt,color=ffcctt,domain=-21.08168864973216:22.49275790313854] plot(\x,{(-0.8811528282033123--2.606761587194037*\x)/2.6389309080196});
\draw [line width=1.0pt,dash pattern=on 1pt off 2pt,color=sexdts,domain=-21.08168864973216:22.49275790313854] plot(\x,{(--485.9674938934599-101.88646319839737*\x)/186.06282846069503});
\draw [line width=1.0pt,dash pattern=on 1pt off 2pt,color=dtsfsf,domain=-21.08168864973216:22.49275790313854] plot(\x,{(--651.9529915373092-67.29196402292307*\x)/81.99419117311172});
\draw [line width=1.0pt,dash pattern=on 1pt off 2pt,color=dtsfsf,domain=-21.08168864973216:22.49275790313854] plot(\x,{(--205.9496047962195-2.1869001553331717*\x)/-20.162089689635344});
\draw [line width=1.0pt,dash pattern=on 1pt off 2pt,color=sexdts,domain=-21.08168864973216:22.49275790313854] plot(\x,{(--143.95456802532706--1.7188773254683225*\x)/-21.937036593915032});
\draw [line width=1.4pt,color=sexdts,domain=-21.08168864973216:22.49275790313854] plot(\x,{(--18.770460129238497-11.842246342404781*\x)/-2.529707870320326});
\draw [line width=1.4pt,color=dtsfsf,domain=-21.08168864973216:22.49275790313854] plot(\x,{(--19.20489939264747-10.5666599068787*\x)/-6.967304277136593});
\draw [line width=1.6pt,color=ffcctt,domain=-21.08168864973216:22.49275790313854] plot(\x,
{(-20.635230765462687-3.9057774808014942*\x)/1.7749469042796866});
\begin{scriptsize}
\draw [color=black] (-5.36,-2.27)-- ++(-1.5pt,-1.5pt) -- ++(3pt,3pt) ++(-3pt,0) -- ++(3pt,-3pt);
\draw [color=black] (2.2,-1.77)-- ++(-1.5pt,-1.5pt) -- ++(3pt,3pt) ++(-3pt,0) -- ++(3pt,-3pt);
\draw[color=sexdts] (-6.5,-1.25) node {$\mathfrak{b}$};
\draw[color=dtsfsf] (-9.0,1.5) node {$\mathfrak{a}$};
\draw [fill=dbwrru] (1.9185545787412073,1.561261622300838) circle (2pt);
\draw[color=black] (1.45,2.45) node {$O$};
\draw [fill=dbwrru] (4.581204081277788,4.191452667576251) circle (2pt);
\draw[color=black] (4.5,5.2) node {$A$};
\draw [fill=black] (19.550936398056226,-8.094084564770771) circle (2pt);
\draw[color=black] (19.87330166952131,-7.284306881533164) node {$I$};
\draw [fill=dbwrru] (-2.3861001958588055,-6.375207239302449) circle (2pt);
\draw[color=black] (-3.5,-6.85) node {$R$};
\draw [fill=dbwrru] (-0.6111532915791189,-10.280984720103943) circle (2pt);
\draw[color=black] (0.15,-9.5) node {$B$};
\end{scriptsize}
\end{tikzpicture}}
    \hfill
    \subfloat[The main theorem.]{\label{fig:teor2} \begin{tikzpicture}[line cap=round,line join=round,>=triangle 45,x=1cm,y=1cm,scale=0.3,every node/.style={scale=1.2},thick,rotate=90]
\clip(-15,-15) rectangle (25,10);
\draw [rotate around={3.7838926880849164:(-1.58,-2.02)},line width=2pt,color=sexdts] (-1.58,-2.02) ellipse (5.784225811185012cm and 4.371083187812707cm);
\draw [rotate around={3.7838926880849226:(-1.58,-2.02)},line width=2pt,color=dtsfsf] (-1.58,-2.02) ellipse (9.137991570470321cm and 8.315767549780759cm);
\draw [line width=1.6pt,color=ffcctt,domain=-21.08168864973216:22.49275790313854] plot(\x,{(-0.8811528282033123--2.606761587194037*\x)/2.6389309080196});
\draw [line width=1.0pt,dash pattern=on 1pt off 2pt,color=sexdts,domain=-21.08168864973216:22.49275790313854] plot(\x,{(--485.9674938934599-101.88646319839737*\x)/186.06282846069503});
\draw [line width=1.0pt,dash pattern=on 1pt off 2pt,color=dtsfsf,domain=-21.08168864973216:22.49275790313854] plot(\x,{(--651.9529915373092-67.29196402292307*\x)/81.99419117311172});
\draw [line width=1.0pt,dash pattern=on 1pt off 2pt,color=dtsfsf,domain=-21.08168864973216:22.49275790313854] plot(\x,{(--205.9496047962195-2.1869001553331717*\x)/-20.162089689635344});
\draw [line width=1.4pt,color=sexdts,domain=-21.08168864973216:22.49275790313854] plot(\x,{(--18.770460129238497-11.842246342404781*\x)/-2.529707870320326});
\begin{scriptsize}
\draw [color=black] (-5.36,-2.27)-- ++(-1.5pt,-1.5pt) -- ++(3pt,3pt) ++(-3pt,0) -- ++(3pt,-3pt);
\draw [color=black] (2.2,-1.77)-- ++(-1.5pt,-1.5pt) -- ++(3pt,3pt) ++(-3pt,0) -- ++(3pt,-3pt);
\draw[color=sexdts] (-6.5,-1.25) node {$\mathfrak{b}$};
\draw[color=dtsfsf] (-9.0,1.5) node {$\mathfrak{a}$};
\draw [fill=dbwrru] (1.9185545787412073,1.561261622300838) circle (2pt);
\draw[color=black] (1.45,2.45) node {$O$};
\draw [fill=dbwrru] (4.581204081277788,4.191452667576251) circle (2pt);
\draw[color=black] (4.5,5.2) node {$A$};
\draw [fill=black] (19.550936398056226,-8.094084564770771) circle (2pt);
\draw[color=black] (19.87330166952131,-7.284306881533164) node {$I$};
\draw [fill=dbwrru] (-0.6111532915791189,-10.280984720103943) circle (2pt);
\draw[color=black] (0.15,-9.5) node {$B$};
\end{scriptsize}
\end{tikzpicture}}

    \caption{Main results.}
    \label{fig:main_res}
\end{figure}

Look at fig.\ref{fig:teor1}. Let $\mathfrak{a}$ be a red ellipse, and $\mathfrak{b}$ -- a green one. Choose the~yellow line $OA$ to start with. The respective reflection in the red ellipse is shown as the red line $AR$. Analogously, the green ellipse reflects $OA$ in the green line~$OB$.
Next, one should reflect the green line $OB$ off the red ellipse. The result corresponds to application of $\mathcal{A}\mathcal{B}$. To obtain the image of $OA$ under the~action of $\mathcal{B}\mathcal{A}$, it is left to reflect the red line $AR$ off the green ellipse. The~theorem says that both outcomes coincide and, in fact, equal to the yellow line $BR$.


\begin{remark}
    There are also tangents to ellipses at the reflection points $O$, $A$, $B$ and $R$ on fig.\ref{fig:teor1}. They are depicted both for convenience and to illustrate that they are incident. This fact, which we will prove together with the main theorem, is the first manifestation of a symmetry hidden deep inside the~system at hand.
\end{remark}

The commutation theorem is implied by the following theorem. 

\begin{theorem}[Main theorem]
    Let $\mathfrak{a}$ and $\mathfrak{b}$ be confocal ellipses (see fig.\ref{fig:teor2}). Take $A \in \mathfrak{a}$, $O \in \mathfrak{b}$. Denote by $I$ a point where the tangent line to $\mathfrak{a}$ at $A$ intersects with the tangent line to $\mathfrak{b}$ at $O$ ($I$ may lie on the line at infinity). If $B \neq A$ is a point, where the second tangent line from $I$ to $\mathfrak{a}$ touches the ellipse, then $OB = \mathcal{B}(OA)$.
\end{theorem}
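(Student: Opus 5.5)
The plan is to reduce the statement to a harmonic-division argument at $O$, using the polarity with respect to $\mathfrak{a}$. Denote by $\ell$ the tangent to $\mathfrak{b}$ at $O$ and by $n$ the normal to $\mathfrak{b}$ at $O$. By the reflection law, $\mathcal{B}(OA)$ is the line through $O$ symmetric to $OA$ with respect to $\ell$ (equivalently, with respect to $n$). So it suffices to show that $\ell$ and $n$ bisect the angle $AOB$. Since $\ell\perp n$, this is equivalent to the four lines $OA, OB, \ell, n$ forming a harmonic pencil. After that, a short orientation check shows that the reflected ray through $O$ goes to $B$ rather than to its antipodal direction.

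\textbf{Step 1 (classical lemma on confocal conics).} The first thing to establish is: \emph{the pole $L$ of $\ell$ with respect to $\mathfrak{a}$ lies on $n$.} I would prove it projectively. The confocal family is a tangential pencil, namely the conics tangent to the four isotropic lines through the foci. Dually to the fact that polars of a fixed point with respect to a point-pencil of conics are concurrent, the poles of a fixed line $\ell$ with respect to the conics of a tangential pencil lie on one line. Two members of the family identify this line:
\begin{itemize}
\item for $\mathfrak{b}$ itself the pole of $\ell$ is $O$;
\item for the degenerate member consisting of the two cyclic points, the pole of $\ell$ is the point at infinity in the direction perpendicular to $\ell$.
\end{itemize}
Hence the locus of poles is exactly the normal $n$, and in particular $L\in n$. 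If a purely Euclidean route is preferred, the same fact can be checked using the degenerate member $\{F_1,F_2\}$ together with the focal properties of tangents.

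\textbf{Step 2 (locating $L$ on $AB$).} Since $I\in\ell$, pole--polar reciprocity gives that $L$, the pole of $\ell$, lies on the polar of $I$ with respect to $\mathfrak{a}$. That polar is the chord of contact $AB$, so $L\in AB$. Now let $P=\ell\cap AB$. Since $P\in\ell$, the polar of $P$ passes through $L$. Since $P\in AB$, the polar of $P$ also passes through $I$. Therefore the polar of $P$ is the line $IL$, and it meets $AB$ at $L$. By the standard harmonic property of a pole and its polar, $L$ is then the harmonic conjugate of $P$ with respect to $A,B$. Projecting the harmonic quadruple $(A,B;P,L)$ from $O$ gives the harmonic pencil $(OA,OB;\ell,OL)$, and $OL=n$ by Step 1. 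This is exactly what was needed, so $OB$ is the reflection of $OA$ in $\ell$.

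\textbf{Step 3 (degenerate cases and orientation).} Several special configurations need to be dealt with:
\begin{itemize}
\item $I$ at infinity, where the tangents at $A$ and $B$ are parallel and $AB$ is a diameter;
\item $P$ at infinity, when $AB\parallel\ell$;
\item $O$ on the line $AB$.
\end{itemize}
All of these are handled by working in the projective plane, or by continuity. For orientation, $O$ lies inside $\mathfrak{a}$, since $\mathfrak{b}$ is the inner ellipse, as in the figure. I expect this to follow because $A$ and $B$ lie on opposite sides of $n$. This is what makes the reflected oriented line point to $B$.

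The main obstacle is Step 1: justifying the ``poles with respect to a tangential pencil are collinear'' fact and the identification of the degenerate member cleanly and elementarily. I would first try the dual Desargues argument sketched above, and fall back on the focal description if it becomes awkward.
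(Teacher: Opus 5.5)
Your proof is correct, and after the first step it takes a genuinely different and much shorter route than the paper.

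\textbf{Step 1 versus the paper's lemma.} Your Step~1 is exactly the paper's Lemma~\ref{lem::1}; your $L$ is the paper's $Q$. The paper verifies it in coordinates: for the family $x^2/(a^2+\lambda)+y^2/(b^2+\lambda)=1$, the pole of the tangent comes out linear in $\lambda$. That is the same linearity of the dual conics that your tangential-pencil argument uses abstractly. Your circular-point version needs complex points; your focal fallback stays real, as the paper does.

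\textbf{What the paper does next.} The paper introduces the second intersections $C,D$ of $OA,OB$ with $\mathfrak{a}$. Brocard's theorem gives a self-polar triangle $OPQ$ with $P=AD\cap BC$, and Pascal on $AACBBD$ puts this $P$ on the tangent. It then uses a projective map $\pi$ sending $P$ to infinity, the ``hidden symmetry'' Lemma~\ref{lem::2}, and the classification of 2-periodic orbits to turn the picture into a symmetric trapezoid.

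\textbf{What your route replaces.} Your La Hire step ($L\in AB$) and the harmonic division $(A,B;P,L)=-1$ for your $P=\ell\cap AB$, projected from $O$, replace all of that. The paper's $\pi$ and Lemma~\ref{lem::2} amount to a roundabout proof that a perpendicular conjugate pair in a harmonic pencil bisects the other pair. Your argument uses one metric input ($\ell\perp n$) and no inside/outside information. So it applies verbatim whichever ellipse is the inner one. That matters, because the commutation theorem uses the main theorem with the roles of $\mathfrak{a}$ and $\mathfrak{b}$ exchanged. The paper's choice of an auxiliary line through $P$ missing $\mathfrak{a}$ leans on convexity of $ABCD$, which would have to be rechecked in that case.

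\textbf{A correction to Step~3.} Your orientation heuristic is backwards in the case you assume. If $\mathfrak{b}$ is inside $\mathfrak{a}$, then $\ell$ passes through $I$ and through interior points of $\mathfrak{a}$. So $\ell$ lies inside the angle (or strip, if $I$ is at infinity) formed by the two tangents from $I$, and it separates their contact points. Hence $A$ and $B$ are on opposite sides of $\ell$. Since the lines $OA$ and $OB$ are mirror images, $A$ and $B$ are then on the \emph{same} side of $n$. It is the other intersection of the line $OB$ with $\mathfrak{a}$ (the paper's $D$) that lies across $n$ from $A$.

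This is exactly what you need: the ray $OB$ is the mirror image of the ray $OA$ in the tangent $\ell$, as the reflection law at $O$ requires. Your ``opposite sides of $n$'' picture is the correct one when $\mathfrak{b}$ is the outer ellipse. Then $\ell$ misses $\mathfrak{a}$, and the trajectory is $A\to O\to B$. The paper itself only proves the statement for unoriented lines, so this slip does not affect the core of your argument.
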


\begin{proof}[Proof of the commutation theorem]
        Given the line $OA$, one can use the main theorem twice to find reflections $OB$ and $AR$. Then one can apply the~main theorem again to $RB$ to deduce that $BO$ and $RA$ are the respective reflections.
\end{proof}

The commutation theorem is a known fact, one can find it in~\cite{serge_2005}. However, the only proof existed so far is based on symplectic geometry and uses an~invariant measure. We are presenting a new, completely different proof. Our approach is purely geometrical and does not use any advanced mathematical concepts.

Moreover, the main theorem is essentially the same as the core Lemma $3.1$ in~\cite{berman2024}. In this paper we provide a purely geometrical proof of the statement, rather than a brute force calculation.

\section{Supplementary statements}
\label{sec:supp}

\subsection{Self-polar triangle lemma}
\label{subsec:supp-1}

\begin{lemma}[Dual self-polar triangle lemma]\label{lem::1}
	Let $\mathfrak{a}$ and $\mathfrak{b}$ be confocal ellipses (see fig.\ref{fig:lemma1}). Suppose $q$ is tangent to $\mathfrak{b}$ at~$O$, $p$ is normal to $\mathfrak{b}$ at $O$ and $o$ is the polar of $O$ w.r.t. $\mathfrak{a}$. Denote by $Q$ the intersection point of $o$ and $p$. Then $Q$ is the~pole of $q$ w.r.t. $\mathfrak{a}$. 
\end{lemma}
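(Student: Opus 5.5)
The plan is to reduce the lemma to the following classical fact: \emph{the poles of a fixed line with respect to all conics of a confocal family lie on one line}. For $q$, this line will turn out to be the normal $p$. Write $P$ for the pole of $q$ w.r.t. $\mathfrak{a}$. It then suffices to show that $P$ lies on both $o$ and $p$; provided $o\neq p$, this forces $P=Q$.

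The first incidence is immediate from La Hire's theorem (pole–polar reciprocity). Since $O\in q$, the pole $P$ of $q$ lies on the polar $o$ of $O$, both taken w.r.t. $\mathfrak{a}$. It remains to prove $P\in p$.

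For the second incidence I will use the dual description of confocal conics. A confocal family is a range of conics tangent to the four isotropic lines through the foci. In dual (line) coordinates the conics are $\mathfrak{c}_t^{*}=\lambda C_1^{*}+\mu C_2^{*}$, i.e. a linear pencil of dual conics. The pole of a line $q$ w.r.t. $\mathfrak{c}_t$ is the point $\mathfrak{c}_t^{*}q=\lambda C_1^{*}q+\mu C_2^{*}q$. This depends linearly on $(\lambda:\mu)$, so all these poles lie on one line $\ell$, namely the line through $C_1^{*}q$ and $C_2^{*}q$ (if these coincide, every pole is the same point and any line through it works). To identify $\ell$, I evaluate the pole at two convenient members of the family.
\begin{itemize}
\item For $\mathfrak{b}$ itself, $q$ is tangent at $O$, so the pole of $q$ is $O$.
\item The range contains the degenerate dual conic formed by the pair of circular points $I_\infty,J_\infty$. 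The pole of $q$ w.r.t. a point-pair is the harmonic conjugate, w.r.t. that pair, of the intersection of $q$ with the line $I_\infty J_\infty$ (the line at infinity). The harmonic conjugate of a direction w.r.t. the circular points is the perpendicular direction. So this pole is the point at infinity of the direction perpendicular to $q$.
\end{itemize}
Hence $\ell$ passes through $O$ and is perpendicular to $q$, i.e. $\ell=p$. In particular $P\in p$, which together with $P\in o$ gives $P=Q$.

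Two points need checking. First, that the circular-point pair really belongs to the confocal dual pencil. This is the standard fact that the dual conics of a confocal family are $\mathfrak{c}^{*}-t\,\Omega^{*}$, where $\Omega^{*}$ is the absolute conic (the circular points); alternatively it can be verified in coordinates from $x^2/(a^2-t)+y^2/(b^2-t)=1$. Second, the nondegeneracy $o\neq p$. If $o=p$, then $O$ would lie on its own polar, so $O\in\mathfrak{a}$ and $o$ would be the tangent to $\mathfrak{a}$ at $O$. Distinct confocal ellipses are disjoint, so this cannot happen when $\mathfrak{a}\neq\mathfrak{b}$. In the case $\mathfrak{a}=\mathfrak{b}$ we have $q=o$, and the statement reads $Q=O$, which is trivial.

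The main obstacle is presenting the dual-pencil step elementarily, as the paper intends. If a purely synthetic route is preferred, I would replace the linearity argument with the focal characterization. Reflect the foci in $q$ and use the reflective property: the tangents from $P$ to $\mathfrak{a}$ make equal angles with the focal radii. One then shows that the foot of the perpendicular from $P$ to $q$ is the point where $q$ meets $\mathfrak{b}$, which is $O$ by the reflection property of $\mathfrak{b}$.
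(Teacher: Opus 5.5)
Your argument is correct, and it reaches the conclusion by a more structural route than the paper.

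The paper normalizes $\mathfrak{b}$ to $x^2/a^2+y^2/b^2=1$ and $\mathfrak{a}$ to $x^2/(a^2+\lambda)+y^2/(b^2+\lambda)=1$. It writes the pole of $q$ with respect to $\mathfrak{a}$ explicitly as $F=\bigl(x_0(a^2+\lambda)/a^2,\ y_0(b^2+\lambda)/b^2\bigr)$ and notes $F\in o$. It then observes that $F$ is the point $t=\lambda$ of the parametrization $x=x_0(a^2+t)/a^2$, $y=y_0(b^2+t)/b^2$ of the normal $p$.

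Your dual-pencil argument is the invariant form of exactly this formula. One has $F=O+\lambda\,(x_0/a^2,\,y_0/b^2)$, where $(x_0/a^2,y_0/b^2)$ is the normal vector of $q$, i.e.\ $F=C_{\mathfrak b}^{*}q+\lambda\,\Omega^{*}q$. So the paper's parametrization of $p$ is precisely your locus $\ell$ of poles of $q$ over the confocal family, with $t$ as the family parameter.

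What your version gains:
\begin{itemize}
\item It explains why the normal appears: the family is a linear range of dual conics containing the circular-point pair, and the ``pole'' of $q$ with respect to that pair is the perpendicular direction.
\item It works verbatim for every member of the confocal family.
\item It explicitly disposes of the case $o=p$. The paper's proof also needs $o\neq p$ to conclude $F=Q$, but does not mention it.
\end{itemize}

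What the paper's version gains is that it is fully elementary, with no circular points or dual conics, in keeping with its stated aim. The one fact you cite as standard, that $\Omega^{*}$ lies in the confocal dual pencil, is checked by the tangency condition $(a^2+t)u^2+(b^2+t)v^2=w^2$ for the line $ux+vy+w=0$. That is the same one-line computation that drives the paper's proof, so in coordinates the two proofs collapse to the same calculation.

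Finally, your closing ``synthetic route'' is not yet an argument. The step that the foot of the perpendicular from $P$ to $q$ is $O$ is equivalent to $P\in p$, i.e.\ to the whole claim. Either supply that step or drop the remark.
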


\begin{figure}[ht!]
    \centering
    
    \subfloat[Dual self-polar triangle lemma.]{\label{fig:lemma1} \begin{tikzpicture}[line cap=round,line join=round,>=triangle 45,x=1cm,y=1cm,scale=0.7,every node/.style={scale=1.2},thick]
\clip(-10,-5) rectangle (10,5);
\draw[line width=1.6pt,fill=black,fill opacity=0.10000000149011612] (2.6848745634359688,0.11772297792343572) -- (3.091854147096124,0.41804424341869595) -- (2.7915328816008635,0.825023827078851) -- (2.3845532979407085,0.5247025615835907) -- cycle; 
\draw [rotate around={0.15197783856389313:(-2.5819374693180546,-0.037533764331688375)},line width=2pt,color=wrwrwr] (-2.5819374693180546,-0.037533764331688375) ellipse (5.165498363004727cm and 2.0046243253924185cm);
\draw [rotate around={0.1519778385638931:(-2.581937469318054,-0.03753376433168837)},line width=2pt] (-2.581937469318054,-0.03753376433168837) ellipse (5.991773741253943cm and 3.638337245793851cm);
\draw [line width=2pt,color=dtsfsf,domain=-15.147432714335514:9.935665661675777] plot(\x,{(--883.6293096980512-318.7988569211707*\x)/235.25031719762393});
\draw [line width=2pt,color=rvwvcq,domain=-15.147432714335514:9.935665661675777] plot(\x,{(-393.69234285873307--235.25031719762393*\x)/318.7988569211707});
\draw [line width=2pt,color=sexdts,domain=-15.147432714335514:9.935665661675777] plot(\x,{(--487.7379584570548-105.13744604457356*\x)/31.8182237356645});
\begin{scriptsize}
\draw[color=wrwrwr] (-5.059474073746713,1.3790932673967506) node {$\mathfrak{b}$};
\draw[color=black] (-5.476260386380558,2.768380976176232) node {$\mathfrak{a}$};
\draw [fill=sexdts] (2.384553297940707,0.5247025615835886) circle (2.5pt);
\draw[color=sexdts] (2.4510727099794007,1.1848966892382896) node {$O$};
\draw[color=dtsfsf] (-1.1,4.4) node {$q$};
\draw[color=rvwvcq] (6.9,4.4) node {$p$};
\draw[color=sexdts] (2.85,4.4) node {$o$};
\draw [fill=dtsfsf] (4.097678723223334,1.7888642451893262) circle (2pt);
\draw[color=dtsfsf] (4.387682345953751,2.4557912417008487) node {$Q$};
\draw [fill=rvwvcq] (5.937283726660525,-4.289770860245994) circle (2pt);
\draw[color=rvwvcq] (6.228488560086562,-3.6223424842093817) node {$P$};
\end{scriptsize}
\end{tikzpicture}}
	\vfill
    \subfloat[Brocard's theorem.]{\label{fig:BT} \begin{tikzpicture}[line cap=round,line join=round,>=triangle 45,x=1cm,y=1cm,scale=0.9,every node/.style={scale=1.2},thick]
\clip(-12,-6) rectangle (4,8);
\fill[line width=2pt,color=rvwvcq,fill=rvwvcq,fill opacity=0.10000000149011612] (-5.37455172011361,2.406342859133738) -- (-2.4423812865859875,3.1028308036865253) -- (0.42109115928762053,-3.523485444340097) -- (-7.169377964857745,-0.4871252862721501) -- cycle;
\fill[line width=2pt,color=dtsfsf,fill=dtsfsf,fill opacity=0.1] (-11.039057455980537,1.0608344562899843) -- (-4.516051018324804,1.527965452873441) -- (-3.4689491630689986,5.478395272429388) -- cycle;
\draw [line width=2pt] (-3,-1.07) circle (4.209923460794108cm);
\draw [line width=2pt,color=rvwvcq] (-5.37455172011361,2.406342859133738)-- (-2.4423812865859875,3.1028308036865253);
\draw [line width=2pt,color=rvwvcq] (-2.4423812865859875,3.1028308036865253)-- (0.42109115928762053,-3.523485444340097);
\draw [line width=2pt,color=rvwvcq] (0.42109115928762053,-3.523485444340097)-- (-7.169377964857745,-0.4871252862721501);
\draw [line width=2pt,color=rvwvcq] (-7.169377964857745,-0.4871252862721501)-- (-5.37455172011361,2.406342859133738);
\draw [line width=1.6pt,dash pattern=on 1pt off 1pt,color=wrwrwr] (-5.37455172011361,2.406342859133738)-- (0.42109115928762053,-3.523485444340097);
\draw [line width=1.6pt,dash pattern=on 1pt off 1pt,color=wrwrwr] (-2.4423812865859875,3.1028308036865253)-- (-7.169377964857745,-0.4871252862721501);
\draw [line width=2pt,color=dtsfsf] (-11.039057455980537,1.0608344562899843)-- (-4.516051018324804,1.527965452873441);
\draw [line width=2pt,color=dtsfsf] (-4.516051018324804,1.527965452873441)-- (-3.4689491630689986,5.478395272429388);
\draw [line width=2pt,color=dtsfsf] (-3.4689491630689986,5.478395272429388)-- (-11.039057455980537,1.0608344562899843);
\draw [line width=1.6pt,dash pattern=on 1pt off 1pt,color=wrwrwr] (-11.039057455980537,1.0608344562899843)-- (-7.169377964857745,-0.4871252862721501);
\draw [line width=1.6pt,dash pattern=on 1pt off 1pt,color=wrwrwr] (-5.37455172011361,2.406342859133738)-- (-11.039057455980537,1.0608344562899843);
\draw [line width=1.6pt,dash pattern=on 1pt off 1pt,color=wrwrwr] (-3.4689491630689986,5.478395272429388)-- (-2.4423812865859875,3.1028308036865253);
\draw [line width=1.6pt,dash pattern=on 1pt off 1pt,color=wrwrwr] (-3.4689491630689986,5.478395272429388)-- (-5.37455172011361,2.406342859133738);
\begin{scriptsize}
\draw [color=black] (-3,-1.07)-- ++(-1.5pt,-1.5pt) -- ++(3pt,3pt) ++(-3pt,0) -- ++(3pt,-3pt);
\draw [fill=rvwvcq] (-2.4423812865859875,3.1028308036865253) circle (2.5pt);
\draw[color=rvwvcq] (-2.2681657174153687,3.57103514583257) node {A};
\draw [fill=rvwvcq] (-5.37455172011361,2.406342859133738) circle (2.5pt);
\draw[color=rvwvcq] (-5.508053447169597,2.8241728691500867) node {D};
\draw [fill=rvwvcq] (-7.169377964857745,-0.4871252862721501) circle (2.5pt);
\draw[color=rvwvcq] (-7.453763031168461,-0.022160968311485563) node {C};
\draw [fill=rvwvcq] (0.42109115928762053,-3.523485444340097) circle (2.5pt);
\draw[color=rvwvcq] (0.6846142277535485,-3.8491564826510236) node {B};
\draw [fill=dtsfsf] (-4.516051018324804,1.527965452873441) circle (2pt);
\draw[color=dtsfsf] (-4.6336975601316492,1.105411310043271) node {$O$};
\draw [fill=dtsfsf] (-3.4689491630689986,5.478395272429388) circle (2pt);
\draw[color=dtsfsf] (-3.2916821862927663,5.901168383489625) node {$Q$};
\draw [fill=dtsfsf] (-11.039057455980537,1.0608344562899843) circle (2pt);
\draw[color=dtsfsf] (-10.870059445214775,1.4804483157851198) node {$P$};
\draw[color=black] (-6.75,-3.8491564826510236) node {$\mathfrak{c}$};
\end{scriptsize}
\end{tikzpicture}}

    \caption{Supplementary statements.}
    \label{fig:supp_st}
\end{figure}

    \begin{proof}
        Consider the case when $\mathfrak{b}$ is given by an equation
        $$
            \frac{x^2}{a^2} + \frac{y^2}{b^2} = 1,
        $$
        and $\mathfrak{a}$ is given by
        $$
            \frac{x^2}{a^2 + \lambda} + \frac{y^2}{b^2 + \lambda} = 1, \quad \lambda \in \mathbb{R}.
        $$
        The general case can be reduced to this one with application of an appropriate isometry.
        
        Let $O$ have coordinates $(x_0,\ y_0)$. Then we can write equations of $q$ and $o$:
        $$
            \begin{aligned}
                &q: \quad \frac{xx_0}{a^2} + \frac{yy_0}{b^2} = 1;\\
                &o: \quad \frac{xx_0}{a^2 + \lambda} + \frac{yy_0}{b^2 + \lambda} = 1.
            \end{aligned}
        $$
        The third line, $p$, we can represent with its parametric equations:
        $$
            \left\{
            \begin{aligned}
                x =& x_0 + \frac{x_0}{a^2}t =  x_0 \left(\frac{a^2 + t}{a^2}\right)\\
                y =& y_0 + \frac{y_0}{b^2}t = y_0 \left(\frac{b^2 + t}{b^2}\right)
            \end{aligned}
            \right. , \quad t \in \mathbb{R}.
        $$

        The pole $F$ of $q$ w.r.t. $\mathfrak{a}$ belongs to $o$ and has coordinates
        $$
            \left( x_0 \left(\frac{a^2 + \lambda}{a^2}\right),\ y_0 \left(\frac{b^2 + \lambda}{b^2}\right) \right),
        $$
        since it follows from the equation of $q$ that
        $$
            \frac{xx_0}{a^2 + \lambda}\left(\frac{a^2 + \lambda}{a^2}\right) + \frac{yy_0}{b^2 + \lambda}\left(\frac{b^2 + \lambda}{b^2}\right) = 1.
        $$
        But now we see that $F$ lies on $p$. Hence $F = Q$.
    \end{proof}

\begin{cor}[Self-polar triangle lemma]\label{SPTL}
    Let OPQ be a self-polar triangle with respect to an ellipse $\mathfrak{a}$. Let there exist an ellipse confocal to $\mathfrak{a}$ that is tangent to the line OP at O. Then the angle O in the triangle is right. 
\end{cor}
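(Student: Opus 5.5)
The plan is to derive the corollary directly from Lemma~\ref{lem::1}, applied to the confocal ellipse $\mathfrak{b}$ that touches $OP$ at $O$. In the notation of the lemma, the tangent to $\mathfrak{b}$ at $O$ is $q = OP$, and $p$ is the normal to $\mathfrak{b}$ at $O$, so $p \perp OP$. The line $o$ is the polar of $O$ with respect to $\mathfrak{a}$.

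First I will identify the lines of the self-polar triangle. Since $OPQ$ is self-polar with respect to $\mathfrak{a}$, the polar of $O$ is $PQ$, so $o = PQ$. Likewise the polar of $Q$ is $OP$, so $Q$ is the pole of $q = OP$ with respect to $\mathfrak{a}$.

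Next I will apply Lemma~\ref{lem::1}. It says that the pole of $q$ with respect to $\mathfrak{a}$ is the intersection point of $o$ and $p$. That pole is $Q$, so $Q$ lies on $p$, the normal to $\mathfrak{b}$ at $O$.

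To conclude, $Q \ne O$, since the vertices of a triangle are distinct. Hence the line $OQ$ is exactly the normal $p$, which is perpendicular to $q = OP$. So $\angle POQ$ is right.

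The main thing to check is the degenerate case where $o$ and $p$ might be parallel, or where $Q$ lies at infinity. Because $OPQ$ is assumed to be a genuine triangle, $Q$ is a finite point, so the intersection in Lemma~\ref{lem::1} is well defined. This step needs no further work.
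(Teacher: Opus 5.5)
Your proof is correct and is essentially the paper's argument. The paper phrases the key step as uniqueness of the self-polar triangle with vertex $O$ and a side on $OP$ (by bijectivity of polar reciprocation), which is your observation that self-polarity forces $Q$ to be the pole of $OP$, after which Lemma~\ref{lem::1} places $Q$ on the normal at $O$. Your inference ``genuine triangle, hence $Q$ finite'' does not quite follow, but it is not needed: the lemma's computation shows the pole of a tangent to $\mathfrak{b}$ is always an affine point.
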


\begin{proof}
    Polar reciprocation is a bijection, so there exists only one self-polar triangle with a side on the line $OP$ and a~vertex $O$. By the previous lemma, $\angle O$ is right in this triangle.
\end{proof}

\subsection{Brocard's theorem}
\label{subsec:supp-2}

\begin{theorem*}[Brocard's theorem]
    Let $ABCD$ be a complete quadrilateral inscribed into a conic $\mathfrak{c}$. Denote $O = AC \cap BD$, $P = AD \cap BC$ and $Q = BA \cap CD$, like on Figure \ref{fig:BT}. Then $\triangle OPQ$ is self-polar.
\end{theorem*}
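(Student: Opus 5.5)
We prove that $O$, $P$, $Q$ satisfy the defining property of a self-polar triangle: each vertex is the pole of the line through the other two. Since the three diagonal points of a complete quadrangle play symmetric roles (relabelling $A,B,C,D$ permutes $O,P,Q$), it suffices to show that the polar of $O$ with respect to $\mathfrak{c}$ is the line $PQ$. The other two statements then follow by the same argument after relabelling. We assume the generic situation, where $A,B,C,D$ are four distinct points on $\mathfrak{c}$, no three collinear, so that $O,P,Q$ are well-defined distinct points, possibly at infinity.

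The key tool is the harmonic characterization of the polar. A point $X$ lies on the polar of $O$ if and only if, whenever a line through $O$ meets $\mathfrak{c}$ at $U,V$ and meets the line through $X$ in the obvious way, the cross-ratio $(U,V;O,X)$ equals $-1$. We may take this as the definition of polarity, or derive it from the bilinear form of the conic. Indeed, if $\mathfrak{c}$ is given by a quadratic form $F$ with polarization $F(\cdot,\cdot)$, then parametrizing points of the line as $U+tV$ reduces the claim to a one-line computation.

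Apply this to the line $AC$, which passes through $O$. Let $X = AC\cap PQ$. We must show $(A,C;O,X)=-1$. This is the classical harmonic property of the complete quadrilateral, which we prove by perspectivity:
\begin{enumerate}
\item Project the line $AC$ from $Q$ onto the line $BD$. This sends $A\mapsto B$ (since $Q\in AB$), $C\mapsto D$ (since $Q\in CD$), $O\mapsto O$, and $X\mapsto Y$, where $Y=BD\cap PQ$.
\item Project $BD$ back onto $AC$ from $P$. This sends $B\mapsto C$ (since $P\in BC$), $D\mapsto A$ (since $P\in AD$), $O\mapsto O$, and $Y\mapsto X$.
\end{enumerate}
The composition preserves cross-ratio, so $(A,C;O,X)=(C,A;O,X)=(A,C;O,X)^{-1}$. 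Hence the cross-ratio is $\pm1$. It cannot equal $1$, because $O\neq X$. Therefore $(A,C;O,X)=-1$, and $X$ lies on the polar of $O$.

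The same argument applied to the line $BD$ through $O$ shows that $Y=BD\cap PQ$ is harmonic to $O$ with respect to $B,D$. So $Y$ also lies on the polar of $O$. Because $X\neq Y$ (the lines $AC$ and $BD$ meet only at $O$, and $O\notin PQ$), the polar of $O$ is the line $XY=PQ$.

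The main obstacle is bookkeeping in degenerate positions. $O$, $P$ or $Q$ may lie at infinity, and we must also check that $O\notin PQ$. We handle both by working in the projective plane, where cross-ratios and polars are defined uniformly. Non-collinearity of the diagonal points holds for any complete quadrangle in characteristic $\neq 2$, and it can be verified directly in projective coordinates with $A,B,C,D=[1:0:0],[0:1:0],[0:0:1],[1:1:1]$.
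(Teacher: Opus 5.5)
Your proof is correct and is essentially the argument the paper relies on. The paper gives no proof of its own: it cites the circle case from Chen's book, where Brocard's theorem is proved by the same two ingredients you use (the complete-quadrilateral harmonic lemma via a double perspectivity, plus the harmonic characterization of the polar), and then asserts that the result transfers to any conic because the argument is projective. You run the argument directly for an arbitrary nondegenerate conic through the polar bilinear form, which makes that transfer remark unnecessary, and you correctly flag the one extra fact the harmonic step needs, namely $O\notin PQ$ (the diagonal points are not collinear).
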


\begin{proof}
    A proof for the case when $\mathfrak{c}$ is a circle can be found in~\cite{chen_2016} . However, the whole statement is formulated in purely projective terms and the proof uses only projective methods. Therefore, the~theorem holds for an arbitrary conic.
\end{proof}

\section{Proof of the main theorem}

\subsection{The primary configuration}

Let us start from the same configuration as the one in fig.\ref{fig:teor2}. First of all, we add points $C$ and $D$. The former one, $C$, is the intersection of $OA$ and $\mathfrak{a}$ different from $A$. The latter one, $D$, is the intersection of $OB$ and $\mathfrak{a}$ different from $B$.

\begin{figure}[ht!]
    \centering
    \begin{tikzpicture}[line cap=round,line join=round,>=triangle 45,x=1cm,y=1cm,scale=0.35,every node/.style={scale=1.2},thick,rotate=90]
\clip(-30,-10) rectangle (20,15);
\fill[line width=1.6pt,color=rvwvcq,fill=rvwvcq,fill opacity=0.10000000149011612] (-11.506274447981097,12.473172474350017) -- (-28.758756416458418,-5.145876418019076) -- (-8.057085818678102,1.1729143743030448) -- cycle;
\draw [rotate around={0:(-2.8675950000000032,-1.3)},line width=2pt,color=sexdts] (-2.8675950000000032,-1.3) ellipse (8.305114854748815cm and 3.167394073452877cm);
\draw [rotate around={0:(-2.8675949999999997,-1.3)},line width=2pt,color=dtsfsf] (-2.8675949999999997,-1.3) ellipse (10.715364490448604cm and 7.475057767612359cm);
\draw [line width=2pt,color=ffcctt,domain=-33.16393378101358:25.28334327839499] plot(\x,{(--22.28227067006179--2.529315473789505*\x)/1.622760260193992});
\draw [line width=1.6pt,color=sexdts,domain=-33.16393378101358:25.28334327839499] plot(\x,{(--991.2620438633114--83.30075355311851*\x)/272.9105642647503});
\draw [line width=1.2pt,color=dtsfsf,domain=-33.16393378101358:25.28334327839499] plot(\x,{(--904.4362895410949--18.511685821984865*\x)/134.73635318187326});
\draw [line width=1.2pt,color=dtsfsf,domain=-33.16393378101358:25.28334327839499] plot(\x,{(--151.99552941703567-14.30244002636461*\x)/-11.966914193868545});
\draw [line width=2pt,color=sexdts,domain=-33.16393378101358:25.28334327839499] plot(\x,{(-33.34361026721655-6.241080373825496*\x)/14.44377379723882});
\draw [line width=1.5pt,dash pattern=on 4pt off 4pt,color=wwqqcc,domain=-33.16393378101358:25.28334327839499] plot(\x,{(-57.52589369274979-3.2327381067159537*\x)/-6.887774053878548});
\draw [line width=1.5pt,dash pattern=on 4pt off 4pt,color=wwqqcc,domain=-33.16393378101358:25.28334327839499] plot(\x,{(--93.89088714914055-0.04084833961340362*\x)/-18.474138684172885});
\draw [line width=1.5pt,dash pattern=on 4pt off 4pt,color=cczzff,domain=-30.0:5.0] plot(\x,{(-94.28121879918416-7.91044516055839*\x)/-0.26147844766857276});
\draw [line width=1.5pt,dash pattern=on 4pt off 4pt,color=cczzff,domain=-33.16393378101358:25.28334327839499] plot(\x,{(--13.51074591720138-11.10233492766094*\x)/11.324886182625766});
\draw [line width=1.6pt,color=rvwvcq] (-11.506274447981097,12.473172474350017)-- (-28.758756416458418,-5.145876418019076);
\draw [line width=1.6pt,color=rvwvcq] (-28.758756416458418,-5.145876418019076)-- (-8.057085818678102,1.1729143743030448);
\draw [line width=1.6pt,color=rvwvcq] (-8.057085818678102,1.1729143743030448)-- (-11.506274447981097,12.473172474350017);
\begin{scriptsize}
\draw [color=black] (-10.545,-1.3)-- ++(-1.5pt,-1.5pt) -- ++(3pt,3pt) ++(-3pt,0) -- ++(3pt,-3pt);
\draw [color=black] (4.80981,-1.3)-- ++(-1.5pt,-1.5pt) -- ++(3pt,3pt) ++(-3pt,0) -- ++(3pt,-3pt);
\draw [fill=rvwvcq] (-8.057085818678102,1.1729143743030448) circle (4pt);
\draw[color=rvwvcq] (-7.5,0.0) node {$O$};
\draw [fill=dtsfsf] (-4.938198204065049,6.034168928138489) circle (4pt);
\draw[color=dtsfsf] (-5.0,7.5) node {$A$};
\draw [fill=black] (18.353602172429262,9.23427402684216) circle (4pt);
\draw[color=black] (18.5,10.75) node {$I$};
\draw [fill=dtsfsf] (6.386687978560717,-5.068165999522451) circle (4pt);
\draw[color=dtsfsf] (6.4,-3.7) node {$B$};
\draw [fill=dtsfsf] (-11.825972257943597,2.801430821422535) circle (4pt);
\draw[color=dtsfsf] (-12.5,4.0) node {$D$};
\draw [fill=dtsfsf] (-12.08745070561217,-5.109014339135855) circle (4pt);
\draw[color=dtsfsf] (-13.2,-5.7) node {$C$};
\draw [fill=rvwvcq] (-11.506274447981097,12.473172474350017) circle (4pt);
\draw[color=rvwvcq] (-10.5,13.0) node {$Q$};
\draw [fill=rvwvcq] (-28.758756416458418,-5.145876418019076) circle (4pt);
\draw[color=rvwvcq] (-28.8,-3.7) node {$P$};
\end{scriptsize}
\end{tikzpicture}
    \caption{The primary configuration.}
    \label{fig:dual-quad}
\end{figure}

Denote $Q = CD \cap BA$, $P = AD \cap BC$. By Brocard's theorem applied to $ABCD$ and $\mathfrak{a}$, $\triangle OPQ$ is self-polar with respect to the ellipse $\mathfrak{a}$.

Next, we need the well-known Pascal's theorem.

\begin{theorem*}[Pascal's theorem]
    Let six points $A, B, C, D, E, F$ lie on a conic on a projective plane. Then three crossing points $P = AB\cap DE;\, Q = BC\cap EF;\, R = CD\cap FA$ are collinear.
    
\end{theorem*}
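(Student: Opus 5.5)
The statement is purely projective, so I would first reduce it to a circle. A nondegenerate real conic is the image of a circle under a projective transformation. Incidence and collinearity are preserved by projective maps, so it suffices to prove the theorem for six points $A, B, C, D, E, F$ on a circle $\mathfrak{c}$. This is the same reduction the paper already used for Brocard's theorem. I would also fix the choice of projective map so that none of the relevant intersection points lies at infinity. Any line avoiding the finitely many relevant points and missing the conic can be sent to infinity while the conic becomes an ellipse, and an affine map then turns the ellipse into a circle.

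The core step is Menelaus' theorem combined with the power of a point. Consider the triangle $XYZ$ cut out by the three alternate lines $AB$, $CD$, $EF$:
$$X = AB \cap CD, \quad Y = CD \cap EF, \quad Z = EF \cap AB.$$
The three remaining sides of the hexagon are transversals of this triangle:
\begin{itemize}
\item line $DE$ meets $XY$ at $D$, $YZ$ at $E$, and $ZX$ at $P$;
\item line $BC$ meets $ZX$ at $B$, $XY$ at $C$, and $YZ$ at $Q$;
\item line $FA$ meets $YZ$ at $F$, $ZX$ at $A$, and $XY$ at $R$.
\end{itemize}
I would write Menelaus' relation (with signed ratios) for each of these three transversals and multiply the three identities together.

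In the product, the factors involving the hexagon vertices group into the products $XA\cdot XB$, $XC\cdot XD$, $YC\cdot YD$, $YE\cdot YF$, $ZE\cdot ZF$, $ZA\cdot ZB$. By the power of a point with respect to $\mathfrak{c}$,
$$XA\cdot XB = XC\cdot XD, \quad YC\cdot YD = YE\cdot YF, \quad ZE\cdot ZF = ZA\cdot ZB,$$
as signed products, so all these factors cancel. What remains is exactly the Menelaus product for the points $P\in ZX$, $Q\in YZ$, $R\in XY$, and it equals the Menelaus constant. By the converse of Menelaus' theorem, $P$, $Q$, $R$ are collinear.

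The main obstacle is the degenerate configurations. These occur when two of the lines $AB, CD, EF$ are parallel or concurrent (so the triangle $XYZ$ degenerates), or when some of the six points coincide or lie at $X, Y, Z$. I would handle them in two ways:
\begin{itemize}
\item Parallelism is removed by the freedom in choosing the projective map in the first step.
\item The remaining coincidences are treated by continuity: collinearity of $P, Q, R$ is a closed condition, and the generic configurations are dense among six points on the conic with the intersections defined.
\end{itemize}
If the degenerate cases turn out to be needed in the paper's application, I would instead check them directly, since they are easy cases of the same computation.
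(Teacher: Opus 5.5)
\textbf{Verdict: correct, but by a different route.} Your Menelaus-plus-power-of-a-point argument is correct. The paper, however, does not prove Pascal's theorem itself; it only cites van Yzeren.

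\textbf{The cited proof.} Van Yzeren's argument is synthetic. It is carried out for a circle and transferred to conics by the same projective reduction you describe. One draws an auxiliary circle through two vertices and one Pascal point, e.g.\ through $C$, $F$ and $R=CD\cap FA$. Equal inscribed angles (Reim's theorem) then give a triangle with sides parallel to those of $\triangle PBE$. The centre of the resulting homothety is $BC\cap EF=Q$, which therefore lies on $PR$.

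\textbf{Your proof.} You use the classical Menelaus argument on the triangle $XYZ$ cut out by $AB$, $CD$, $EF$. Your assignment of which transversal meets which side is right, and the three powers of $X$, $Y$, $Z$ do cancel with signed lengths.

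\textbf{Trade-off.} Your route gives a mechanical verification whose only geometric input is the power of a point, with no auxiliary construction. The cost is extra genericity. You need $AB$, $CD$, $EF$ non-concurrent and all relevant points finite with nonzero powers. So a density/continuity step is needed even for six distinct points, for example when $AB$, $CD$, $EF$ are concurrent. The homothety argument never uses the triangle $XYZ$, so it avoids that particular case.

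\textbf{Caution about your last sentence.} The paper applies the theorem to the degenerate hexagon $AACBBD$, and there the direct computation does not go through. With either choice of alternate sides, the triangle has a vertex on the conic: $CB\cap BD=B$ in one case, $DA\cap AC=A$ in the other. At that vertex the power vanishes, and the transversal through it makes one of the Menelaus ratios undefined. So in that case you must rely on your continuity argument instead. Concretely, take chords $A_1A_2$ and $B_1B_2$ tending to the tangents at $A$ and $B$, and check that the limiting pairs of lines being intersected stay distinct. This is exactly what the paper's footnote does.
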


A simple proof of Pascal's theorem was published by van~Yzeren~\cite{van_Yzeren_1993}.

Let us apply Pascal's theorem to the~degenerate hexagon $AACBBD$\footnote{Pascal's theorem for this degenerate hexagon is a limit case for non-degenerate hexagons, where two distinct points $A_1, A_2$ on the conic approach each other and merge at a point $A$, and analogously for $B$.}, where $AA$ will denote the tangent line to the~corresponding conic through point $A$. The~theorem proves that $P$, $O$ and $I$ are incident. In other words, the side $OP$ of~$\triangle OPQ$ lies on the tangent line $IO$ to the ellipse $\mathfrak{b}$. Hence $\angle O$ is right by the Self-polar triangle lemma.

\subsection{Construction of a projective transformation}

One can view the Euclidean plane as an affine chart on $\mathbb{RP}^2$. Without loss of generality we may assume that in~homogeneous coordinates on $\mathbb{RP}^2$
\begin{align*}
    O &= [1 : 0 : 0], \\
    P &= [1 : p : 0], \\
    Q &= [1 : 0 : q].
\end{align*}

The space of lines on $\mathbb{RP}^2$ forms the dual projective plane $(\mathbb{RP}^2)^*$, and it has dual coordinates: the line given by the~equation $ax+by+cz = 0$ will~have coordinates $(a:b:c)$. Then $OP = (0 : 0 : 1)$, $OQ = (0 : 1: 0)$ and the~line at~infinity $\infty = (1 : 0 : 0)$ in the dual coordinates.

\begin{figure}[ht!]
    \centering
    
    \subfloat[The original plane.]{\label{fig::before} \begin{tikzpicture}[line cap=round,line join=round,>=triangle 45,x=1cm,y=1cm,scale=0.225,every node/.style={scale=1.2},thick,rotate=90]
    \clip(-30,-15) rectangle (20,15);
    \draw[line width=0.8pt,fill=black,fill opacity=0.25] (-8.439822280119717,2.426838489048527) -- (-9.693746394865200,2.04410202760691) -- (-9.311009933423584,0.790177912861429) -- (-8.057085818678102,1.1729143743030448) -- cycle;
    \draw [rotate around={0:(-2.8675949999999997,-1.3)},line width=2pt,color=dtsfsf] (-2.8675949999999997,-1.3) ellipse (10.715364490448604cm and 7.475057767612359cm);
    \draw [line width=0.8pt,color=ffcctt,domain=-33.85882362578984:22.999738231281484] plot(\x,{(--22.28227067006179--2.529315473789505*\x)/1.622760260193992});
    \draw [line width=1.2pt,color=sexdts,domain=-33.85882362578984:22.999738231281484] plot(\x,{(--991.2620438633114--83.30075355311851*\x)/272.9105642647503});
    \draw [line width=0.8pt,color=sexdts,domain=-33.85882362578984:22.999738231281484] plot(\x,{(-33.34361026721655-6.241080373825496*\x)/14.44377379723882});
    \draw [line width=1.2pt,color=rvwvcq,domain=-33.85882362578984:22.999738231281484] plot(\x,{(-87.00154636229871-11.300258100046973*\x)/3.449188629302995});
    \draw [line width=0.4pt,color=fuqqzz,domain=-33.85882362578984:22.999738231281484] plot(\x,{(-221.68649235290266-3.4696802873675585*\x)/23.689415023386537});
    \draw [line width=0.4pt,color=fuqqzz,domain=-33.85882362578984:22.999738231281484] plot(\x,{(--183.842309585696--9.901118486413187*\x)/19.608233276304908});
    \begin{scriptsize}
    \draw [fill=rvwvcq] (-8.057085818678102,1.1729143743030448) circle (4pt);
    \draw[color=rvwvcq] (-7.8,3.0) node {$O$};
    \draw [fill=dtsfsf] (-4.938198204065049,6.034168928138489) circle (4pt);
    \draw[color=dtsfsf] (-4.25,5.1) node {$A$};
    \draw [fill=black] (18.353602172429262,9.23427402684216) circle (4pt);
    \draw[color=black] (18.75,8.5) node {$I$};
    \draw [fill=dtsfsf] (6.386687978560717,-5.068165999522451) circle (4pt);
    \draw[color=dtsfsf] (6.0,-3.75) node {$B$};
    \draw [fill=dtsfsf] (-11.825972257943597,2.801430821422535) circle (4pt);
    \draw[color=dtsfsf] (-11.25,1.7) node {$D$};
    \draw [fill=dtsfsf] (-12.08745070561217,-5.109014339135855) circle (4pt);
    \draw[color=dtsfsf] (-11.0,-5.0) node {$C$};
    \draw [fill=rvwvcq] (-11.506274447981097,12.473172474350017) circle (4pt);
    \draw[color=rvwvcq] (-10.75,13.5) node {$Q$};
    \draw [fill=rvwvcq] (-28.758756416458418,-5.145876418019076) circle (4pt);
    \draw[color=rvwvcq] (-28.25,-6.1) node {$P$};
    \draw [fill=fuqqzz] (-9.150523140153508,4.7552420683941135) circle (4pt);
    \draw[color=fuqqzz] (-8.65,6.15) node {$U$};
    \draw [fill=fuqqzz] (-5.069341393071884,-8.615556705386632) circle (4pt);
    \draw[color=fuqqzz] (-4.4,-7.8) node {$V$};
    \end{scriptsize}
\end{tikzpicture}}
    \hfill
    \subfloat[The image under $\pi$.]{\label{fig::after} \begin{tikzpicture}[line cap=round,line join=round,>=triangle 45,x=1cm,y=1cm,scale=0.95,every node/.style={scale=1.2},thick,rotate=90]
\clip(-7,-2) rectangle (5,4);
\draw[line width=0.8pt,fill=black,fill opacity=0.25] (-1.4723955391833172,1.4671315739360025) -- (-1.679809672512561,1.4683962942611808) -- (-1.6810743928377394,1.2609821609319372) -- (-1.4736602595084956,1.2597174406067588) -- cycle; 
\draw [rotate around={-0.34936017950278525:(-1.48,0.22)},line width=2pt,color=dtsfsf] (-1.48,0.22) ellipse (3.65803102696984cm and 1.6193798177926113cm);
\draw [line width=1.2pt,color=rvwvcq,domain=-2.0:3.5] plot(-1.4736602595084956,\x);
\draw [line width=1.2pt,color=sexdts,domain=-7.0:5.0] plot(\x,{(-8.2048--0.04*\x)/-6.56});
\draw [line width=0.8pt,color=ffcctt,domain=-7.0:2.0] plot(\x,{(-6.700183135466573-1.8398372449129843*\x)/-3.1664943066545232});
\draw [line width=0.8pt,color=sexdts,domain=-5.0:4.0] plot(\x,{(--1.1923303628420463-1.8783147873189343*\x)/3.1438226479214024});
\draw [line width=0.4pt,color=fuqqzz,domain=-7.0:5.0] plot(\x,{(--634.5969426610379-2.114031981627683*\x)/346.7012449869413});
\draw [line width=0.4pt,color=fuqqzz,domain=-7.0:5.0] plot(\x,{(--488.3059295324017--2.114031981627697*\x)/-346.70124498694116});
\begin{scriptsize}
\draw [fill=rvwvcq] (-1.4736602595084956,1.259717440606759) circle (1.2pt);
\draw[color=rvwvcq] (-1.2,0.8) node {$O'$};
\draw [fill=dtsfsf] (-2.369798078285086,1.7951257969263188) circle (1.2pt);
\draw[color=dtsfsf] (-2.25,2.1) node {$D'$};
\draw [fill=black] (4.1449683347709145,1.2254575101538356) circle (1.2pt);
\draw[color=black] (4.265,0.95) node {$I'$};
\draw [fill=dtsfsf] (-0.5710599480659858,1.7841578814981536) circle (1.2pt);
\draw[color=dtsfsf] (-0.7,2.1) node {$A'$};
\draw [fill=dtsfsf] (-4.640154566163019,-0.5801198043062252) circle (1.2pt);
\draw[color=dtsfsf] (-4.7,-0.25) node {$C'$};
\draw [fill=dtsfsf] (1.6701623884129069,-0.6185973467121757) circle (1.2pt);
\draw[color=dtsfsf] (1.8,-1.0) node {$B'$};
\draw [fill=rvwvcq] (-1.464621211621663,2.7421212940472737) circle (1.2pt);
\draw[color=rvwvcq] (-1.2,2.9) node {$Q'$};
\draw [fill=fuqqzz] (-1.47012591637708,1.8393497141589394) circle (1.2pt);
\draw[color=fuqqzz] (-1.2,2.1) node {$U'$};
\draw [fill=fuqqzz] (-1.4898740836229207,-1.399349714158939) circle (1.2pt);
\draw[color=fuqqzz] (-1.2,-1.1) node {$V'$};
\end{scriptsize}
\end{tikzpicture}}
    
    \caption{How the transformation $\pi$ works.}
    \label{fig:before-after}
\end{figure}

Let $P' = \infty \cap OP = [0 : 1 : 0]$, $O' = O$ and $Q' = Q$. One can always find a~projective map $\pi$ sending $O$ to $O'$, $P$ to $P'$ and $Q$ to $Q'$ (see fig.~\ref{fig:before-after}). Since a~projective transformation is defined by images of four points in general position (no~three are collinear), there is still one degree of freedom in the~choice of $\pi$. We would like the image of the ellipse $\mathfrak{a}$ to be an ellipse. To~make sure that the conic $\pi(\mathfrak{a})$ does not intersect the line at infinity, we will fix the image of another point.

There exists a line $l$, such that $P\in l$, and $l\cap\mathfrak{a}=\varnothing$. That is because $P$, as an intersection point of opposite sides $AD$ and $BC$ of the convex quadrilateral $ABCD$ inscribed in $\mathfrak{a}$, lies outside of $\mathfrak{a}$ on the Euclidean plane. Choose an arbitrary point $R \in l$, such that $R \notin OQ$. Let $R' = [0:1:1]$. Then we can find a~unique projective transformation $\pi$, such that:
$$
\pi(O) = O',\,\,\,\pi(Q) = Q',\,\, \pi(P) = P',\,\, \pi(R) = R'.
$$

Note, that $P'$ and $R'$ both lie on the line at infinity, therefore $P'R'$, the~image of $PR$ under $\pi$, is the line at infinity, and it doesn't intersect $\pi(\mathfrak{a})$.

In the next subsection we explore an important property of the map $\pi$.

\subsection{Hidden symmetry of the transformation}

We claim that $\angle O'$ of $\triangle O'P'Q'$ is right. Indeed, by the definition, $\pi$ maps $OP$ onto itself since $P' \in OP$, and $\pi$ sends $OQ$ onto itself, too. Hence $\pi$ preserves the~angle between them.

This result has a generalization:

\begin{lemma}[Hidden symmetry of $\pi$]\label{lem::2}
    Let $l_+$ and $l_-$ be two lines intersecting at $O$. Then $l_+$ and $l_-$ are reflections of each other in $OQ$ if and only if $\pi(l_+)$ and $\pi(l_-)$ are reflections of each other in $O'Q'$.
\end{lemma}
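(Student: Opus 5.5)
The idea is to reduce everything to the pencil of lines through $O$. Since $\pi(O)=O'=O$, the map $\pi$ sends lines through $O$ to lines through $O$. Restricted to this pencil, it is a projectivity of a projective line. I will show that this projectivity commutes with the reflection in $OQ$, read as a map of the pencil, and the lemma then follows formally.

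\textbf{Step 1 (coordinates on the pencil).} A line through $O=[1:0:0]$ has dual coordinates $(0:b:c)$, so the pencil is identified with $\mathbb{RP}^1$ via $(b:c)$. In these coordinates $OP=(0:0:1)$ and $OQ=(0:1:0)$. By construction $\pi$ maps $OP$ to $O'P'=OP$ and $OQ$ to $O'Q'=OQ$. A projectivity of $\mathbb{RP}^1$ fixing $(0:1)$ and $(1:0)$ is diagonal, so $\pi$ acts on the pencil as $(b:c)\mapsto(\mu b:c)$ for some $\mu\neq 0$. Here $\mu$ is read off from the matrix of the induced map on the dual plane, which is the inverse transpose of the matrix of $\pi$.

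\textbf{Step 2 (the reflection as a map of the pencil).} Let $\sigma$ be the Euclidean reflection in $OQ$. It fixes $O$, so it permutes the lines through $O$ and induces a projectivity of the pencil. This projectivity is a non-identity involution. Its fixed lines are $OQ$ and the line through $O$ perpendicular to $OQ$, which is $OP$ because $\angle O$ is right, as shown above via the Self-polar triangle lemma. A non-identity involution of $\mathbb{RP}^1$ with fixed points $(0:1)$ and $(1:0)$ must be $(b:c)\mapsto(-b:c)$. Indeed, a diagonal map $\mathrm{diag}(\nu,1)$ is an involution only when $\nu^2=1$, and $\nu=1$ is excluded.

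For the image the argument is the same. We already showed that $\angle O'$ of $\triangle O'P'Q'$ is right, so $O'P'$ is perpendicular to $O'Q'$. Hence the reflection $\sigma'$ in $O'Q'$ induces on the pencil at $O'=O$ exactly the same map $(b:c)\mapsto(-b:c)$. Throughout, both reflections are taken with respect to the same Euclidean structure on the fixed affine chart, which is legitimate because $\pi$ is a self-map of $\mathbb{RP}^2$ and we measure angles in the same chart.

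\textbf{Step 3 (conclusion).} The maps $\mathrm{diag}(\mu,1)$ and $\mathrm{diag}(-1,1)$ commute, so $\pi\circ\sigma=\sigma'\circ\pi$ on the pencil at $O$. Therefore $l_-=\sigma(l_+)$ holds if and only if $\pi(l_-)=\pi\sigma(l_+)=\sigma'\pi(l_+)$. Since $\pi$ is a bijection on the pencil, both directions of the lemma follow.

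The only delicate point is Step 2. There one must justify that a Euclidean reflection acts projectively on the pencil, which holds because it is an affine map and hence a projective map of $\mathbb{RP}^2$. One must also check that the perpendicularity of $OP$ and $OQ$ holds both before and after applying $\pi$. Both facts are already available from the preceding subsections.
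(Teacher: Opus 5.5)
Your proof is correct and takes essentially the same route as the paper. The paper also passes to the projectivity that $\pi$ induces on the pencil of lines through $O$ (defined via the differential $d\pi_O$), notes that it preserves the two perpendicular lines $OP$ and $OQ$, and concludes that it commutes with the reflection in $OQ$. Your diagonal-matrix computation in the dual coordinates $(b:c)$ just makes explicit the commutation step that the paper dismisses with ``one can show easily''.
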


\begin{proof}
    Consider the differential of the transformation $\pi$ at the point $O$. It is a linear transformation acting on the tangent space to $\mathbb{RP}^2$ at $O$, therefore we can define its action $\Delta$ on the pencil $O^*$ of lines going through $O$ in a natural way. One can show easily that, since $\Delta$ preserves a pair of orthogonal lines $OP$ and $OQ$, it commutes with the reflections with respect to these lines in~$O^*$. Therefore, for any $l_+, l_-$ going through $O$ and symmetric with respect to $OQ$, their images $\pi(l_+), \pi(l_-)$ are symmetric with respect to $O'Q'$, and vice versa, as needed.
\end{proof}

\subsection{The transformed configuration}

From now on the prime symbol is a shorthand for application of $\pi$. For~example, $I' = \pi(I)$, $\mathfrak{a}' = \pi(\mathfrak{a})$.

Denote $U$ and $V$ the points on the ellipse $\mathfrak{a}$ at which tangent lines to the~ellipse go through $P$. There is a classical geometrical property of pole and polar: if two tangent lines can be drawn from a pole to the conic section, then its polar passes through both tangent points. Thus, the line $UV$ is the polar of $P$ with respect to the ellipse $\mathfrak{a}$. But $OQ$ is the~polar of $P$, since $\triangle OPQ$ is self-polar. Therefore we conclude that $U$ and $V$ lie on the line $OQ$.

After we apply the transformation $\pi$, the points $U'$ and $V'$ are two points where $O'Q'$ intersects $\mathfrak{a}'$. The point $P$, which is the pole of $OQ$, went to infinity, so tangents to $\mathfrak{a}'$ at $U'$ and $V'$ are parallel to $I'O'$. Consequently, they are perpendicular to $O'Q'$. Therefore, $O'Q'$ must coincide with an axis of $\mathfrak{a}'$: there are no other two-periodic orbits in ellipses~\cite{serge_2005}.

Recall that $P$ is defined as the intersection point of $AD$ and $BC$. Hence $A'D'$, $B'C'$ and $I'O'$ are all parallel to each other.
Equivalently, one can say that $A'D' \perp O'Q'$ and $B'C' \perp O'Q'$.

Now it is clear that $A'B'C'D'$ is a trapezoid, and it is symmetric with respect to $O'Q'$, since it is an axis of $\mathfrak{a}'$. Hence $O'Q'$ is the bisector of $\angle A'O'D'$. Since $O'Q'$ is perpendicular to $I'O'$, we obtain that $O'B'$ is the~reflection of $O'A'$ in $O'Q'$. But then $OB$ is the~reflection of $OA$ in $OQ$ by lemma \ref{lem::2}. This~completes the proof of the Main theorem.

\printbibliography

\end{document}